\documentclass[11pt]{article}
\usepackage{amsmath} % \font used for R in Real numbers
\usepackage{verbatim,empheq}
\usepackage{amssymb, float}
\usepackage{color}
\usepackage{graphicx,epsfig}
\graphicspath{{figures/}}
\usepackage[applemac]{inputenc}
\pagestyle{myheadings}
 %\markboth{ }{Sensitivity analysis}
% \title{{\vspace{-1in}
%\parbox{\linewidth}{\footnotesize\noindent
%{Journal of Nonlinear and Convex Analysis},\newline
%Volume ?, Number ?, 2006, 1-10} \vspace{\bigskipamount}
%\\
\title{ On Properties of Differential Inclusions with Prox-regular Sets }

\setlength{\textwidth}{16.6cm}
\setlength{\textheight}{22cm}
\setlength{\oddsidemargin}{0cm}

%\usepackage[dvips]{graphicx}
%\everymath={\displaystyle}
\setcounter{page}{1}
%%%%%%%%%%%

%%%%%%%%%%%%

\def\beq{\begin{equation}}
\def\eeq{\end{equation}}
\def\baq{\begin{eqnarray}}
\def\eaq{\end{eqnarray}}
\def\baqn{\begin{eqnarray*}}
\def\eaqn{\end{eqnarray*}}

\newcommand{\R}{\mathbb{R}}

\newcommand{\ball}{\mathbb{B}}

%----------------DEFINITIONS-----------------
%

%-----------------------------------
%\def\c{\mathop{{\rm 1}\mskip-10.0mu{\rm C}}\nolimits}
%\def\C{\mathop{{\rm 1}\mskip-10.0mu{\rm C}}\nolimits}

%\newcommand{\ball}{\mathbb{B}}

%\newcommand{\R}{\mathbb{R}}
%\newcommand{\N}{\mathbb{N}}

  %%%%%%%%%%%%%%%%%

  \newcommand{\Multi}{\,\,\lower 1pt
     \hbox{$\overrightarrow{\longrightarrow}$}\,\,}
     \newcommand{\multi}{\,\,\lower 1pt
     \hbox{$\overrightarrow{\rightarrow}$}\,\,}
%  \newcommand{\ssi}{\Longleftrightarrow}

%#################
\def\image #1 (#2,#3) (echelle #4) #5{
\dimen2=#2
\dimen3=#3
\divide \dimen2 by 1000
\multiply \dimen2 by #4
\divide \dimen3 by 1000
\multiply \dimen3 by #4
\setbox1 =\vbox to \dimen2{\hsize=\dimen3\vfill\special{picture #1
scaled #4}}
\vbox{\hsize=\dimen3\box1\medskip\centerline{#5}}
}
%#################

\newtheorem{definition}{Definition}[section]
\newtheorem{prop}{Proposition}[section]
\newtheorem{thm}{Theorem}[section]

\newtheorem{lemma}{Lemma}[section]

%\numberwithin{equation}{section}

   %%%%%%%%%%
%   \newtheorem{problem}[theorem]{Problem}
      %%%%%%%%%%%%%%%%

  % \renewcommand{\thefootnote}{\fnsymbol{footnote}}
 % \renewcommand{\thefootnote}{\arabic{footnote}}
 % \renewcommand{\thefootnote}{$\dag$}
   %%%%%%%%%%%%%%%%
  % \newcounter{secnum}

%\usepackage{showlabels}

\begin{document}

\author{Ba Khiet \sc Le\\
{\small \it Centro de Modelamiento Matem\'atico (CMM), Universidad de Chile, Santiago, Chile}\\
{\small  lkhiet@dim.uchile.cl} \\
}

%\author{
%\author{Samir \sc Adly\\
%{\small \it XLIM UMR-CNRS 7252, Universit\'e de Limoges, 87060 Limoges, France}\\
%{\small  samir.adly@unilim.fr} \\
%\\
%\\
%Ba Khiet \sc Le\\
%{\small \it Centro de Modelamiento Matem\'atico (CMM),
%Universidad de Chile, Santiago, Chile}\\
%{\small ahantoute@dim.uchile.cl
%  } \\
%  }}
%\\
%\\
%  Lionel \sc Thibault\\
%{\small \it D\'epartement de Math\'ematiques, Universit\'e Montpellier II, 34095 Montpellier cedex 5, France}\\
%{\small  lionel.thibault@math.univ-montp2.fr}
%  }

%\address[authorlabel1]{IREMIA, Universit\'e de La R\'eunion, 97400 Saint-Denis, France}
%\address[authorlabel2]{DMI-XLIM, Universit\'e de Limoges, 87060 Limoges, France}
%\address[authorlabel3]{INRIA Rh\^ones-Alpes, 38334 Saint-Ismier, France}

\maketitle
\begin{abstract}

In this paper,   some regularity properties of solutions of  the following differential inclusion
\begin{equation}\nonumber
\left\{
\begin{array}{l}
\dot{x}(t) \in f\big(x(t)\big) -N_{C}\big(x(t)\big)\; {\rm a.e.} \; t \in [0,+\infty),\\ \\
x(0) = x_0\in C,
\end{array}\right.
\end{equation}
are analyzed where $f: H\to H$ is Lipschitz continuous  and $C$ is closed, uniformly prox-regular subset of a Hilbet space $H$. Here $N_{C}(\cdot)$ denotes the  proximal normal cone of $C$.  This work can be considered as an improvement of \cite{hm} since these properties are established without the additional tangential condition at each point in $C$. 
\end{abstract}

\noindent {\bf Keywords:} Differential Inclusion, Uniformly Prox-regular Set, Normal Cone.\\
\noindent {\bf AMS subject classications:} 34A60, 49J52, 49J53.
\section{Introduction}
In the seventies, sweeping processes are introduced and  deeply studied by J. J. Moreau through the series of papers \cite{M1,M2,M3,M4,M5} which plays an important role in elasto-plasticity, quasi-statics, dynamics, especially in mechanics \cite{M56,M6,Brogliato}. Roughly speaking, a point is swept by a moving closed convex set $C(t)$ in a Hilbert space $H$ and can be formulated in the form of differential inclusion as follows
\begin{equation}
\left\{
\begin{array}{l}
\dot{x}(t) \in -N_{C(t)}(x(t))\; {\rm a.e.} \; t \in [0,T],\\ \\

x(0) = x_0\in C(0),
\end{array}\right.
\end{equation}
where $N_{C(t)}(\cdot)$ denotes the normal cone of $C(t)$ in the sense of convex analysis. When the systems are perturbed, it is natural to study the following variant
\begin{equation}
\left\{
\begin{array}{l}
\dot{x}(t) \in -N_{C(t)}(x(t))+F(t,x(t))\; {\rm a.e.} \; t \in [0,T],\\ \\

x(0) = x_0\in C(0),
\end{array}\right.
\end{equation}
where $F: \R^+\times H \to 2^H$  is a set-valued mapping with nonempty weakly compact convex values in $H$.
 For example, to study the planning procedures in mathematical economy, C. Henry \cite{Henry} introduced and proved the existence of solutions  in finite dimension of the system
\begin{equation}\label{henry}
\left\{
\begin{array}{l}
\dot{x}(t) \in  P_{T_{C}\big(x(t)\big)}\big(F(x(t))\big)\; {\rm a.e.} \; t \in [0,T],\\ \\
x(0) = x_0\in C,
\end{array}\right.
\end{equation}
where $F:\R^n\to 2^{\R^n}$ is upper semi-continuous with nonempty, convex, compact values and $C$ is a closed, convex set in $\R^n$. Here $T, P$ denote the tangent cone and projection operators, respectively. Later B. Cornet \cite{Cornet} extended the system $(\ref{henry})$ for the case $C\subset\R^n$ is Clarke tangentially regular and reduced to 
\begin{equation}
\left\{
\begin{array}{l}
\dot{x}(t) \in F\big(x(t)\big) -N_{C}\big(x(t)\big)\; {\rm a.e.} \; t \in [0,T],\\ \\
x(0) = x_0\in C.
\end{array}\right.
\end{equation}
 There are numerous results for various variants of sweeping processes in literature but most of them are about the existence of solutions (see, e.g., \cite{Bounkhel,ET,ET1,t2}). 
In this paper, we are interested in properties of solutions of the differential inclusion 
\begin{equation}\label{main}
\left\{
\begin{array}{l}
\dot{x}(t) \in f\big(x(t)\big) -N_{C}\big(x(t)\big)\; {\rm a.e.} \; t \in [0,+\infty),\\ \\
x(0) = x_0\in C,
\end{array}\right.
\end{equation}
 where $f: H\to H$ is Lipschitz continuous  and $C$ is closed, uniformly prox-regular subset of a Hilbet space $H$. It is known that  $(\ref{main})$ has a unique locally absolutely continuous solution $x(\cdot)$ on $ [0,+\infty)$ (see \cite{ET} for example).
However, it is also important to know more regularity properties of solutions, even the asymptotic behaviour, to understand better the systems. In \cite{hm}, the authors considered this direction for the same problem. The main properties are the right differentiable of the solution and $\dot{x}^+(\cdot)$ is right continuous at each $t\ge 0$, which later play an important role in studying Lyapunov functions as well as asymptotic behaviour of solutions. However, these properties are obtained in \cite{hm} under  the tangential condition: $f(x)\in T(C,x)$ for all $x\in C.$ The condition is unnecessary since if $C$ is closed, convex then $N_C(\cdot)$ is maximal monotone operator and thus we do not need such kind of condition \cite{Brezis}. It  motivates us to establish the same properties but without the additional tangential condition.

The paper is organized as follow.  In section \ref{section2}, we recall some basic notations, definitions and results which are used throughout the paper.  Some regularities properties of solutions are established without tangential  condition in section $\ref{section3}$.
Some conclusions and perspectives end the paper in section \ref{section4}.
\section{Notations and Mathematical Background}
Let us  begin with some notations used in  the paper. Let $H$ be a Hilbert space. Denote by $\langle\cdot,\cdot\rangle$ , $\|\cdot\|$ the scalar product and the corresponding norm in $H$.  Denote by $I$ the identity operator, by  $\ball$ the unit ball in $H$ and $\ball_r=r\ball,\; \ball_r(x)=x+r\ball$. The distance from a point $s$ to a closed set $C$ is denoted by ${ d}(s,C)$ or ${ d}_C(s)$ and
$${ d}(s,C)=\inf_{x\in C} \|s-x\|.$$
Denote by $C^0$ the set of minimal norm elements of $C$, $i. e.$
$$C^0=\{c\in C: \|c\|=\inf_{c'\in C} \|c'\|\}.$$
It is know that if $C$ is closed and convex then $C^0$ contains exactly one element.
The set of all points in $C$ that are nearest to $s$ is denoted by
$${\rm Proj}(C,s)=\{x\in C: \|s-x\|={ d}(s,C)\}.$$
When ${\rm Proj}(C,s)=\{x\}$, we can write $x={\rm proj}(C,s)$ to emphasize the single-valued property. Let $x\in {\rm Proj}(C,s)$ and $t\ge 0$, then the vector $t(s-x)$ is called \textit{proximal normal} to $C$ at $x$. The set of all such vectors is a cone, called \textit{proximal normal cone} of $C$ at $x$ and denoted by $N^P(C,x)$. It is a known result \cite{Clarke,Rockafellar1} that $\xi\in N^P(C,x)$ if and only if there exist some $\sigma>0, \delta>0$ such that
$$\langle \xi, y-x \rangle \le \delta \|y-x\|^2\;\;{\rm for\;all\;}y\in C\cap \ball_\sigma(x).$$
The  Fr\'echet normal cone $N^F(\cdot)$, the limiting normal cone $N^L(\cdot)$ and the Clarke normal cone $N^C(\cdot)$ are defined respectively as follows:
$$N^F(C,x)=\{\xi\in H: \forall \delta>0, \exists \sigma>0 \;s.\; t.\; \langle \xi, y-x \rangle \le \delta \|y-x\|\;{\rm for\;all\;}y\in C\cap \ball_\sigma(x)\}.$$
\baqn
N^L(C,x)&=&\{\xi\in H:\exists\; \xi_n\to \xi\;{\rm weakly\;and}\; \xi_n\in N^P(C,x_n), x_n\to x\;{\rm in}\; C\}\\
&=&\{\xi\in H:\exists\; \xi_n\to \xi\;{\rm weakly\;and}\; \xi_n\in N^F(C,x_n), x_n\to x\;{\rm in}\; C\}.
\eaqn
$$N^C(C,x)=\overline{{\rm co}}N^L(C,x).$$
If $x\notin C$, one has $N^P(C,x)=N^F(C,x)=N^L(C,x)=N^C(C,x)=\emptyset$
and  for all $x\in C$:
$$N^P(C,x)\subset N^F(C,x)\subset N^L(C,x)\subset N^C(C,x).$$
If $C$ is convex then these normal cones are coincide. It is in fact still true for prox-regular sets, which are defined as follows. Then we can write only $N(C,x)$ for simplicity.
\begin{definition} The closed set $C$ is called $r-prox-regular$ iff  each point $s$ in the $r$-enlargement of $C$
$$U_{r}(C)=\{w\in H: { d}(w,C)<r\},$$
has a unique nearest point ${\rm proj}(C,s)$ and the mapping ${\rm proj}(C,\cdot)$ is continuous in $U_{r}(C)$.
\end{definition}
\begin{prop} \cite{Poliquin,t2}\label{propprox} Let $C$ be a closed set in $H$. The followings are equivalent:\\
1) $C$ is $r-prox-regular$.\\
2) For all $x\in C$ and $\xi \in N^L(C,x)$ such that $\|\xi\|\le r$, we have
\beq
x={\rm proj}(C,x+\xi).
\eeq
3) For all $x\in C$ and $\xi\in N^L(C,x)$, we have
$$\langle \xi, y-x\rangle \le \frac{\|\xi\|}{2r}\|y-x\|^2\;\;\forall \; y\in C.$$
4) (Hypo-monotonicity) For all $x,x'\in C$, $\xi\in N^L(C,x)$, $\xi'\in N^L(C,x')$  and $\xi,\xi'\in \ball_{r}$ we have
$$\langle \xi-\xi',x-x' \rangle \ge -\|x-x'\|^2.$$
\end{prop}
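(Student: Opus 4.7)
The plan is to establish the chain of implications (1) $\Rightarrow$ (2) $\Rightarrow$ (3) $\Rightarrow$ (4) $\Rightarrow$ (1). The first three are essentially direct manipulations, while (4) $\Rightarrow$ (1) is the heart of the proposition and where I expect the real work.

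For (1) $\Rightarrow$ (2), I would start with $\xi \in N^P(C,x)$ satisfying $\|\xi\| < r$. By definition of proximal normal there is some $t>0$ with $x \in {\rm Proj}(C, x+t\xi)$, and a routine rescaling shows $x \in {\rm Proj}(C, x+\xi)$. Since $x+\xi \in U_r(C)$, condition (1) makes this projection a singleton and gives $x = {\rm proj}(C, x+\xi)$. To extend to $\xi \in N^L(C,x)$ with $\|\xi\| \le r$, I would invoke the sequential definition of $N^L$: pick $x_n \to x$ in $C$ and $\xi_n \in N^P(C,x_n)$ with $\xi_n \rightharpoonup \xi$, apply the previous argument (truncating slightly below $r$ if needed), and pass to the limit using continuity of ${\rm proj}(C, \cdot)$ on $U_r(C)$.

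For (2) $\Rightarrow$ (3), I would use that $N^L(C,x)$ is a cone. Given $0 \ne \xi \in N^L(C,x)$, set $\eta := (r/\|\xi\|)\xi \in N^L(C,x)$ with $\|\eta\| = r$. By (2) we have $x = {\rm proj}(C, x+\eta)$, which means $\|y - x - \eta\|^2 \ge \|\eta\|^2$ for every $y \in C$; expanding gives $\langle \eta, y-x \rangle \le \|y-x\|^2/2$, and substituting $\eta = (r/\|\xi\|)\xi$ yields exactly (3). The implication (3) $\Rightarrow$ (4) is then immediate: apply (3) to $(x,\xi)$ and $(x',\xi')$, use $\|\xi\|,\|\xi'\|\le r$ to bound the right-hand sides by $\|x-x'\|^2/2$ each, and add.

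The crux is (4) $\Rightarrow$ (1). Fix $s \in U_r(C)$ and set $d := d(s,C) < r$. Suppose $x_1, x_2 \in {\rm Proj}(C,s)$; then $s-x_i \in N^P(C,x_i) \subset N^L(C,x_i)$ with $\|s-x_i\| = d$. The key trick is to rescale these normals so that the hypomonotonicity in (4) becomes useful: set $\xi_i := (r/d)(s-x_i)$, which still lies in $N^L(C,x_i)$ (it is a cone) and has norm exactly $r$. Applying (4) to $(x_1,\xi_1)$ and $(x_2,\xi_2)$ gives
\[
\frac{r}{d}\,\langle (s-x_1) - (s-x_2),\; x_1 - x_2 \rangle \;\ge\; -\|x_1-x_2\|^2,
\]
i.e., $(r/d)\|x_1-x_2\|^2 \le \|x_1-x_2\|^2$; since $r/d > 1$, this forces $x_1 = x_2$. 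Continuity of ${\rm proj}(C,\cdot)$ on $U_r(C)$ follows by the same rescaling idea applied to a perturbation $s_n \to s$: boundedness of the projections and the inequality obtained by applying (4) to the scaled proximal normals at $x_n := {\rm proj}(C,s_n)$ and $x := {\rm proj}(C,s)$ yield $\|x_n - x\| \to 0$. The main obstacle is precisely this step: the hypomonotonicity constant in (4) is only $1$, so uniqueness is not immediate, and the whole argument rests on noticing that whenever $d < r$ one can scale proximal normals past norm $r$ to create a strict contradiction.
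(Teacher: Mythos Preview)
The paper does not give its own proof of this proposition: it is quoted from \cite{Poliquin,t2} and used as a tool, so there is no in-paper argument to compare your proposal against. That said, your outline has two substantive gaps that the cited references do address and that you should be aware of.

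First, in $(1)\Rightarrow(2)$ the ``routine rescaling'' only works in one direction. Knowing $x\in{\rm Proj}(C,x+t\xi)$ for some small $t>0$ lets you conclude the same for all smaller $t$, but not for $t=1$; pushing the projection identity up to radius $\|\xi\|$ is exactly what prox-regularity is supposed to buy you, and it requires an argument (typically via the $C^{1,1}$ smoothness of $d_C^2$ on $U_r(C)$, or an open--closed connectedness argument along the segment $[x,x+\xi]$). Your limiting-cone extension inherits the same gap, and has the additional difficulty that $\xi_n\rightharpoonup\xi$ only weakly while the continuity of ${\rm proj}(C,\cdot)$ granted by (1) is for strong convergence of the argument.

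Second, in $(4)\Rightarrow(1)$ you establish uniqueness of nearest points nicely with the $r/d$ rescaling trick, but you never prove \emph{existence}. In an infinite-dimensional Hilbert space a closed set need not admit nearest points, so showing ${\rm Proj}(C,s)\ne\emptyset$ for $s\in U_r(C)$ is a genuine part of the proof. The standard route uses the hypomonotonicity to show that any minimizing sequence for $\|s-\cdot\|$ on $C$ is Cauchy (or, equivalently, that $d_C^2$ plus a suitable quadratic is convex on a tube around $C$), and this step is where most of the work in \cite{Poliquin,t2} actually lies. Your implications $(2)\Rightarrow(3)$ and $(3)\Rightarrow(4)$ are correct as written.
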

If $r=+\infty$, then $C$ is convex. Some examples of prox-regular sets \cite{Bounkhel}: 
\begin{enumerate}
\item The finite union of disjoint intervals is non-convex but uniformly $r$-prox-regular
and $r$ depends on the distances between the intervals.
\item More generally, any finite union of disjoint convex subsets in $H$ is non-convex but
uniformly $r$-prox-regular and $r$ depends on the distances between the sets.
\end{enumerate}

We finish the section with a version of Gronwall's inequality (see, e.g., Lemma 4.1 in \cite{Showalter}).
\begin{lemma}\label{gronwall}
Let $T>0$ be given and $a(\cdot),b(\cdot)\in L^1([t_0,t_0+T];\R)$ with $b(t)\ge 0$ for almost all $t\in [t_0,t_0+T].$ Let the absolutely continuous function $w: [t_0,t_0+T]\to \R_+$ satisfy:
\beq
(1-\alpha)w'(t)\le a(t)w(t)+b(t)w^\alpha(t),\;\; a. e. \;t\in [t_0,t_0+T],
\eeq
where $0\le \alpha<1$. Then for all $t\in [t_0,t_0+T]$:
\beq
w^{1-\alpha}(t)\le w^{1-\alpha}(t_0){\rm exp}\Big(\int_{t_0}^t a(\tau)d\tau\Big)+\int_{t_0}^t{\rm exp}\Big(\int_{s}^t a(\tau)d\tau\Big)b(s)ds.
\eeq
\end{lemma}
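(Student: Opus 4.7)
The plan is to reduce this Bernoulli-type inequality to a standard linear Gronwall inequality via the substitution $v(t) = w^{1-\alpha}(t)$. On an interval where $w > 0$, $v$ is absolutely continuous and $v'(t) = (1-\alpha)\,w^{-\alpha}(t)\,w'(t)$ almost everywhere, so multiplying the hypothesis by $w^{-\alpha}(t)$ gives
\[
v'(t) \le a(t)\,v(t) + b(t) \quad \text{a.e.\ on } [t_0,t_0+T].
\]

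Next, I would integrate this linear inequality with the usual integrating factor $E(t) = \exp\bigl(-\int_{t_0}^t a(\tau)\,d\tau\bigr)$. Since $\tfrac{d}{dt}\bigl[E(t)v(t)\bigr] \le E(t)b(t)$ a.e., integrating from $t_0$ to $t$ and dividing by $E(t)$ yields
\[
v(t) \le v(t_0)\,\exp\!\Bigl(\int_{t_0}^t a(\tau)\,d\tau\Bigr) + \int_{t_0}^t \exp\!\Bigl(\int_{s}^t a(\tau)\,d\tau\Bigr)\,b(s)\,ds,
\]
which is exactly the desired conclusion after replacing $v$ by $w^{1-\alpha}$.

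The main obstacle is that $w$ may vanish on a subset of $[t_0,t_0+T]$, where $v = w^{1-\alpha}$ loses its smoothness and the division by $w^{\alpha}$ used above is not meaningful. I would handle this by a standard regularization: replace $w$ by $w_\epsilon = w + \epsilon$ for $\epsilon > 0$, so $w_\epsilon > 0$ everywhere and $w_\epsilon' = w'$ a.e. Using $w \le w_\epsilon$ and $w^\alpha \le w_\epsilon^\alpha$, the hypothesis upgrades to
\[
(1-\alpha)\,w_\epsilon'(t) \le a(t)\,w_\epsilon(t) + b(t)\,w_\epsilon^\alpha(t) + |a(t)|\,\epsilon \quad \text{a.e.}
\]
The reduction above applies cleanly to $w_\epsilon$, producing the analogous estimate with an extra $O(\epsilon)$-term controlled by $\|a\|_{L^1}$. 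Letting $\epsilon \to 0^+$ and invoking dominated convergence (which is legitimate because $a,b \in L^1$ and $w$ is bounded on the compact interval) recovers the inequality for $w$. I expect this $\epsilon$-perturbation and the limit passage to be the only real technicality; the algebraic reduction to the linear case is routine.
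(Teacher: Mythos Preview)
The paper does not actually prove this lemma: it is stated as a known result and attributed to Lemma~4.1 in \cite{Showalter}, with no argument given. So there is nothing in the paper to compare your approach against.

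That said, your proposal is sound. The Bernoulli substitution $v=w^{1-\alpha}$ reducing to the linear Gronwall inequality, together with the $\epsilon$-regularization $w_\epsilon=w+\epsilon$ to avoid the zero set of $w$, is the standard way this is done (and is essentially how Showalter proceeds). One small point worth tightening: after dividing by $w_\epsilon^\alpha$, the error term is $|a(t)|\,\epsilon\,w_\epsilon^{-\alpha}(t)\le |a(t)|\,\epsilon^{1-\alpha}$, so the extra contribution in the final estimate is $O(\epsilon^{1-\alpha})$ rather than $O(\epsilon)$; since $0\le\alpha<1$ this still vanishes as $\epsilon\to0^+$. You should also note that $v_\epsilon=w_\epsilon^{1-\alpha}$ is absolutely continuous because $x\mapsto x^{1-\alpha}$ is Lipschitz on $[\epsilon,\max w+\epsilon]$. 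With these clarifications the argument is complete.
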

\label{section2}
\section{Main Results}
\label{section3}
Let us first recall the existence and uniqueness result of $(\ref{main})$ (see, e.g.,  \cite{ET}).
\begin{thm}
Let $H$ be a Hilbert space and $C$ be a closed, r-prox-regular set. Let $f:H\to H$ be a k-Lipschitz continuous function. Then for each $x_0\in C$, the following differential inclusion 
\begin{equation}
\left\{
\begin{array}{l}
\dot{x}(t) \in f\big(x(t)\big) -N_{C}\big(x(t)\big)\; {\rm a.e.} \; t \in [0,+\infty),\\ \\
x(0) = x_0\in C,
\end{array}\right.
\end{equation}
has a unique locally absolutely continuous solution $x(\cdot)$. In addition, we have
\beq
\|\dot{x}(t) -f\big(x(t)\big)\|\le \|f\big(x(t)\big)\|\; {\rm for}\; a. e. \; t\ge 0.
\eeq
\end{thm}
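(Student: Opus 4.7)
The plan is to use Moreau's catching-up algorithm to produce a solution together with the norm estimate, and to invoke the hypo-monotonicity of the normal cone (Proposition \ref{propprox}(3)) combined with Gronwall's inequality for uniqueness.

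First I would construct approximate solutions on a finite horizon $[0,T]$. Partition $[0,T]$ with mesh $h>0$ and iterate
\begin{equation*}
x_0^h=x_0,\qquad x_{k+1}^h=\mathrm{proj}\bigl(C,\, x_k^h+h f(x_k^h)\bigr).
\end{equation*}
A discrete Gronwall argument keeps the iterates in a bounded set, so by Lipschitzness of $f$ the quantities $\|f(x_k^h)\|$ are uniformly bounded; in particular $d(x_k^h+h f(x_k^h),C)\le h\|f(x_k^h)\|<r$ for $h$ small, and the projection is well defined by prox-regularity. The fundamental observation is that $x_k^h\in C$ is itself a competitor for the projection, yielding
\begin{equation*}
\bigl\|x_{k+1}^h-x_k^h-h f(x_k^h)\bigr\|\;\le\;h\|f(x_k^h)\|,
\end{equation*}
i.e.\ $\|(x_{k+1}^h-x_k^h)/h-f(x_k^h)\|\le \|f(x_k^h)\|$. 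Passing to a piecewise affine interpolation $x^h$ and extracting a subsequence that converges uniformly with $\dot x^h\rightharpoonup \dot x$ weakly in $L^2$, I would identify the limit as a solution of (\ref{main}) using the closedness of the graph of $N(C,\cdot)$ on the $r$-tube (a consequence of the continuity of $\mathrm{proj}(C,\cdot)$ on $U_r(C)$); Mazur's lemma then preserves the pointwise inequality in the limit, giving $\|\dot x(t)-f(x(t))\|\le \|f(x(t))\|$ a.e.

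For uniqueness, let $x_1,x_2$ be two solutions on $[0,T]$ and set $v_i:=f(x_i)-\dot x_i\in N(C,x_i)$. The estimate just established yields $\|v_i(t)\|\le \|f(x_i(t))\|\le M$ locally. Applying Proposition \ref{propprox}(3) with $y=x_j(t)$ and summing the two inequalities gives
\begin{equation*}
\langle v_1(t)-v_2(t),\,x_1(t)-x_2(t)\rangle\;\ge\;-\tfrac{M}{r}\|x_1(t)-x_2(t)\|^2.
\end{equation*}
Combined with the $k$-Lipschitz bound on $f$, this yields $\tfrac{1}{2}\tfrac{d}{dt}\|x_1-x_2\|^2\le (k+M/r)\|x_1-x_2\|^2$; Lemma \ref{gronwall} with $\alpha=0$ then forces $x_1\equiv x_2$ on $[0,T]$, and a continuation argument extends the conclusion to $[0,\infty)$.

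The main obstacle is the passage to the limit in the catching-up scheme: one must confirm simultaneously that the limiting trajectory remains in $C$, that the finite-difference derivatives converge weakly to $\dot x$, and that the discrete proximal normals $(x_k^h+h f(x_k^h)-x_{k+1}^h)/h$ cluster at selections of $N(C,x(t))$. Prox-regularity is precisely the tool that makes this work: the continuity of $\mathrm{proj}(C,\cdot)$ on $U_r(C)$ closes the graph of $N(C,\cdot)$ along convergent sequences, while the quasi-monotone estimate of Proposition \ref{propprox}(3) provides the compactness needed for the pointwise inequality to survive under weak $L^2$-convergence, despite the lack of convexity of $C$.
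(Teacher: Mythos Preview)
The paper does not actually prove this theorem; it merely recalls it and cites \cite{ET} (Edmond--Thibault). Your catching-up scheme for existence together with a hypo-monotonicity/Gronwall argument for uniqueness is precisely the strategy of that reference, so in spirit you are reproducing the intended proof rather than diverging from it.

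There is, however, a genuine gap in the existence half as you have written it. In an infinite-dimensional Hilbert space $H$, equi-Lipschitz continuity and boundedness of the interpolants $x^h$ do \emph{not} allow you to ``extract a subsequence that converges uniformly'': the Arzel\`a--Ascoli theorem requires relative compactness of the ranges, which is unavailable here. Since the graph-closedness of $N(C,\cdot)$ that you invoke needs strong convergence of the base points, weak limits of $x^h$ are not enough to identify the inclusion. The standard remedy---and this is what the cited works do---is to deploy the hypo-monotonicity estimate of Proposition~\ref{propprox} \emph{already at the discrete level}, exactly as in your uniqueness step: comparing two interpolants $x^h,x^{h'}$ yields, after the same computation,
\[
\frac{d}{dt}\|x^h(t)-x^{h'}(t)\|^2 \le 2\Big(k+\frac{M}{r}\Big)\|x^h(t)-x^{h'}(t)\|^2 + \varepsilon(h,h'),
\]
with $\varepsilon(h,h')\to 0$ as $h,h'\to 0$. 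Hence $(x^h)$ is Cauchy in $C([0,T];H)$ and converges strongly without any compactness extraction. Once strong convergence of $x^h$ is secured, $f(x^h)\to f(x)$ strongly, the discrete normals are bounded and converge weakly into $N(C,x(\cdot))$, and the inequality $\|\dot x - f(x)\|\le\|f(x)\|$ survives by weak lower semicontinuity of the norm; the remainder of your outline then goes through.
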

Let $x(\cdot)$ be the unique solution of  $(\ref{main})$ satisfying $x(0)=x_0$. Define $v: \R_+\to H$ by $v(t):=\Big(f\big(x(t)\big)-N\big(C,x(t)\big)\Big)^0$ and $v_0:=v(0)=\big(f(x_0)-N(C,x_0)\big)^0.$ By using similar arguments as in Lemma 1.8 \cite{MT}, we have the following lemma.
\begin{lemma}\label{lsc}
 We have 
\beq
\|v_0\|\le \liminf_{t\to 0^+}\|v(t)\|.
\eeq
\end{lemma}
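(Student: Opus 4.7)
If $\liminf_{t\to 0^+}\|v(t)\|=+\infty$ there is nothing to prove, so assume it equals some finite $\ell$. The strategy is to pick a minimizing sequence, extract a weak limit, identify it as an admissible element of $f(x_0)-N(C,x_0)$, and invoke weak lower semicontinuity of the norm.

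First I would fix a sequence $t_n\downarrow 0$ with $\|v(t_n)\|\to \ell$. Writing $\xi(t):=f(x(t))-v(t)\in N(C,x(t))$, the boundedness of $\{v(t_n)\}$ together with continuity of $f$ and of $x(\cdot)$ gives boundedness of $\{\xi(t_n)\}$. Pass to a subsequence (not relabeled) such that $v(t_n)\rightharpoonup v^\ast$ weakly. Since $x(t_n)\to x_0$ strongly and $f$ is continuous, $f(x(t_n))\to f(x_0)$ strongly, and hence $\xi(t_n)\rightharpoonup \xi^\ast:=f(x_0)-v^\ast$ weakly.

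The crux of the argument is to show that $\xi^\ast\in N(C,x_0)$. Here I use prox-regularity of $C$: by Proposition \ref{propprox} the various normal cones coincide, so $\xi(t_n)\in N^P(C,x(t_n))=N^L(C,x(t_n))$. The weak sequential closedness of the graph of $N^L$ with respect to strong convergence of base points, which is exactly the definition of $N^L$ recalled in Section \ref{section2}, then yields $\xi^\ast\in N^L(C,x_0)=N(C,x_0)$. Consequently $v^\ast=f(x_0)-\xi^\ast\in f(x_0)-N(C,x_0)$.

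Since $v_0$ is the element of minimum norm in $f(x_0)-N(C,x_0)$, this gives $\|v_0\|\le\|v^\ast\|$. Combining with the weak lower semicontinuity of the norm,
\[
\|v_0\|\le \|v^\ast\|\le \liminf_{n\to\infty}\|v(t_n)\|=\ell=\liminf_{t\to 0^+}\|v(t)\|,
\]
which is the claim. The only delicate point is the identification $\xi^\ast\in N(C,x_0)$; everything else is routine weak compactness and continuity. Uniform prox-regularity is precisely what makes this identification work, because without it $N^P$ does not enjoy a closed graph property under weak-strong limits.
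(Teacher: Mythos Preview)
Your proof is correct and follows the same overall strategy as the paper's: pick a minimizing sequence, extract a weak limit, identify the limit as an element of $f(x_0)-N(C,x_0)$, and conclude via weak lower semicontinuity of the norm together with the minimality of $v_0$. The only difference lies in the identification step: the paper passes to the limit directly in the prox-regularity inequality $\langle f(x(t_{n_k}))-v(t_{n_k}),\,c-x(t_{n_k})\rangle\le \frac{\beta}{2r}\|c-x(t_{n_k})\|^2$ (Proposition~\ref{propprox}(3)) to obtain $f(x_0)-\xi\in N(C,x_0)$, whereas you appeal to the definition of $N^L$ as the weak--strong sequential outer limit of $N^P$ (valid here since prox-regularity gives $N^P=N^L$). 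Both arguments are sound and of comparable length; the paper's version is slightly more self-contained in that it does not need to invoke the coincidence of the normal cones a second time.
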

\begin{proof}
If $\liminf_{t\to 0^+}\|v(t)\|=+\infty$ then the conclusion holds. If $\liminf_{t\to 0^+}\|v(t)\|=\gamma< +\infty,$ then there exists a sequence $(t_n)_{n\ge 1}$ such that $t_n \to 0^+$ and $\lim_{n\to +\infty}\|v(t_n)\|=\gamma.$ In particular, the sequence $\big(v(t_n)\big)_{n\ge 1}$ is bounded hence there exist a subsequence $\big(v(t_{n_k})\big)_{k\ge 1}$ and $\xi\in H$ such that $\big(v(t_{n_k})\big)_{k\ge 1}$ converges weakly to $\xi$. Recall that 
$$v(t_{n_k})=\Big(f\big(x(t_{n_k})\big)-N\big(C;x(t_{n_k})\big)\Big)^0\in f\big(x(t_{n_k})\big)-N\big(C;x(t_{n_k})\big).$$
Hence $f\big(x(t_{n_k})\big)-v(t_{n_k})\in N\big(C;x(t_{n_k})\big).$ We can find some $\beta>0$ such that $\|f\big(x(t_{n_k})\big)-v(t_{n_k})\|\le \beta$ for all $k\ge 1.$ Using the prox-regularity of $C$, one has
\beq
\langle f(x\big(t_{n_k})\big)-v(t_{n_k}), c-x(t_{n_k}) \rangle \le \frac{\beta}{2r}\|c-x(t_{n_k})\|^2 \;{\rm for\; all}\; c\in C, \;k\ge 1.
\eeq
Let $k\to +\infty$, we get
\beq
\langle f(x_0)-\xi,c-x_0\rangle \le \frac{\beta}{2r}\|c-x_0\|^2 \;{\rm for\; all}\; c\in C.
\eeq
Thus $f(x_0)-\xi\in N(C;x_0)$ or equivalently $\xi\in f(x_0)-N(C;x_0)$. Then
\beq
\|\xi\|\le \liminf_{k\to+\infty}\|v(t_{n_k})\|= \liminf_{n\to+\infty}\|v(t_n)\|=\gamma,
\eeq
due to the weak lower semicontinuity of the norm and the conclusion follows.
\end{proof}
\begin{lemma}\label{limsup}
Let $x(\cdot)$ be the unique solution of  $(\ref{main})$ satisfying $x(0)=x_0$. Then one has
\beq
\limsup_{t\to 0^+}\|\frac{x(t)-x_0}{t}\|\le \|v_0\|,
\eeq
where $v_0=\big(f(x_0)-N(C,x_0)\big)^0=f(x_0)-{\rm proj}\big(f(x_0), N_C(x_0)\big).$
\end{lemma}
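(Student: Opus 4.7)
The plan is to derive a differential inequality for $w(t) := \|x(t) - x_0\|^2$ and apply Lemma \ref{gronwall} with $\alpha = 1/2$, tuning the coefficients so that $\|v_0\|$ appears as the multiplier of $\sqrt{w(t)}$. This is what will make $\|v_0\|$ come out as the leading-order constant when we divide by $t$ and send $t \to 0^+$. First, set $\eta_0 := f(x_0) - v_0 \in N(C,x_0)$, so that $v_0 = f(x_0) - \eta_0$. Since $0 \in N(C,x_0)$, the minimum-norm property of $v_0$ gives $\|v_0\| \le \|f(x_0)\|$, hence $\|\eta_0\| \le 2\|f(x_0)\|$. For a.e.\ $t \ge 0$ write $\dot x(t) = f(x(t)) - \eta(t)$ with $\eta(t) \in N(C,x(t))$; the bound $\|\dot x(t) - f(x(t))\| \le \|f(x(t))\|$ from the existence theorem recalled above yields $\|\eta(t)\| \le \|f(x(t))\|$, which is uniformly bounded on a small interval $[0,T]$ by continuity of $f \circ x(\cdot)$.

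I would then differentiate $w$ (using the absolute continuity of $x(\cdot)$) and employ the key decomposition $\dot x(t) = \bigl(f(x(t)) - f(x_0)\bigr) + v_0 + \bigl(\eta_0 - \eta(t)\bigr)$. The $k$-Lipschitz continuity of $f$ handles the first piece, yielding $\langle x(t) - x_0,\, f(x(t)) - f(x_0)\rangle \le k\, w(t)$. Cauchy--Schwarz handles the middle piece, giving $\langle x(t) - x_0,\, v_0\rangle \le \|v_0\|\,\sqrt{w(t)}$. The critical third piece is treated by applying Proposition \ref{propprox}(3) twice: once at $(x_0,\eta_0)$ with test point $y = x(t) \in C$ and once at $(x(t),\eta(t))$ with test point $y = x_0 \in C$, and then adding, to obtain $\langle \eta_0 - \eta(t),\, x(t) - x_0\rangle \le \frac{\|\eta_0\| + \|\eta(t)\|}{2r}\, w(t)$. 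Combining these estimates gives, for a.e.\ $t \in [0,T]$, the inequality $\tfrac{1}{2}\dot w(t) \le \tfrac{A}{2}\, w(t) + \|v_0\|\,\sqrt{w(t)}$, where $A = 2k + (\|\eta_0\| + \|\eta(t)\|)/r$ is uniformly bounded on $[0,T]$.

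Finally, applying Lemma \ref{gronwall} with $\alpha = 1/2$, $a(t) \equiv A/2$, $b(t) \equiv \|v_0\|$ and $w(0) = 0$ produces $\sqrt{w(t)} \le \frac{2\|v_0\|}{A}\bigl(e^{At/2} - 1\bigr)$ on $[0,T]$, so that dividing by $t$ and letting $t \to 0^+$ (using $(e^{At/2}-1)/t \to A/2$) gives $\limsup_{t \to 0^+} \|x(t) - x_0\|/t \le \|v_0\|$. The main obstacle is finding the decomposition that places $v_0$ inside the $\sqrt{w(t)}$ term rather than inside the $w(t)$ term: a naive estimate based only on $\|\dot x(t)\| \le 2\|f(x(t))\|$ would yield only $\limsup \le 2\|f(x_0)\|$, and it is precisely the prox-regularity of $C$ (applied at both $x_0$ and $x(t)$) combined with the minimal-norm characterization of $v_0$ that lets the sharp constant $\|v_0\|$ emerge.
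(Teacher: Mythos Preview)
Your proof is correct and follows essentially the same approach as the paper: both derive the differential inequality $\tfrac{1}{2}\tfrac{d}{dt}\|x(t)-x_0\|^2 \le a(t)\|x(t)-x_0\|^2 + \|v_0\|\,\|x(t)-x_0\|$ via the decomposition $\dot x(t)-v_0 = (f(x(t))-f(x_0)) + (\eta_0 - \eta(t))$, handle the pieces by Lipschitz continuity and Proposition~\ref{propprox}, and then apply Lemma~\ref{gronwall} with $\alpha=1/2$ and $w(0)=0$. The only cosmetic differences are that the paper keeps the time-dependent coefficient $a(t)=k+\tfrac{1}{r}(\|f(x(t))\|+\|v_0-f(x_0)\|)$ and passes to the limit in $\tfrac{1}{t}\int_0^t \exp\bigl(\int_s^t a(\tau)\,d\tau\bigr)\,ds \to 1$, whereas you bound $a(t)$ by a constant $A/2$ on $[0,T]$ and compute the integral explicitly; just be careful to distinguish notationally between the $t$-dependent expression and its uniform bound when you write $a(t)\equiv A/2$.
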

\begin{proof}
We have
\begin{equation}
\left\{
\begin{array}{l}
\dot{x}(t) - f\big(x(t)\big)\in- N_{C}\big(x(t)\big)\; {\rm a.e.} \; t \in [0,+\infty),\\ \\
v_0-f(x_0)\in - N_{C}(x_0),
\end{array}\right.
\end{equation}
and $\|\dot{x}(t) -f\big(x(t)\big)\|\le \|f\big(x(t)\big)\|\; {\rm for}\; a. e. \; t\ge 0.$
Using the prox-regularity of $C$ and Proposition \ref{propprox}, one has
\beq
\langle \dot{x}(t)-f\big(x(t)\big)-v_0+f(x_0), x(t)-x_0 \rangle \le \frac{1}{r}\big(\|f\big(x(t)\big)\|+\|v_0-f(x_0)\|\big)\|x(t)-x_0\|^2.
\eeq
Combining with the k-Lipschitz continuity of $f(\cdot)$, one deduces that
\beq
\frac{1}{2}\frac{d}{dt}\|x(t)-x_0\|^2\le \|v_0\|\|x(t)-x_0\|+a(t)\|x(t)-x_0\|^2,
\eeq
where $a(t)=k+\frac{1}{r}\big(\|f\big(x(t)\big)\|+\|v_0-f(x_0)\|\big)$. Using Gronwall's inequality (Lemma \ref{gronwall}), one obtains for all $t\ge 0$ that
\beq
\|x(t)-x_0\|\le \|v_0\| \int_0^t {\rm exp}\Big(\int_s^t a(\tau)d\tau\Big)ds.
\eeq
Hence
\beq
\limsup_{t\to 0^+}\|\frac{x(t)-x_0}{t}\|\le \|v_0\|\limsup_{t\to 0^+}\frac{1}{t}\int_0^t {\rm exp}\Big(\int_s^t a(\tau)d\tau\Big)ds=\|v_0\|.
\eeq
\end{proof}
\begin{lemma}\label{es2sol}
Let $x(\cdot),y(\cdot)$ be the unique solution of $(\ref{main})$ satisfying initial conditions $x(0)=x_0,y(0)=y_0$ respectively. Then for all $t\ge 0:$
\beq\label{2sol}
\|x(t)-y(t)\|\le\|x(0)-y(0)\| {\rm exp}\Big(\int_0^tb(s)ds\Big)\;\;t\ge 0,
\eeq
where $b(t)=k+\frac{1}{r}(\|f(x(t))\|+\|f(y(t))\|).$
In particular, for $a. e.\;t\ge 0$, one has
\beq\label{boundedde}
\|\dot{x}(t)\|\le \|v_0\|{\rm exp}\Big(\int_0^t\big(k+\frac{2\|f(x(s))\|}{r}\big)ds\Big),
\eeq
where $v_0$ is defined in Lemma \ref{limsup}.
\end{lemma}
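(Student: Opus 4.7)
The plan is to derive part~(i) from the prox-regular variational inequality (condition~3 of Proposition~\ref{propprox}), and then obtain part~(ii) by specializing to the pair $x(\cdot)$ and its time-shift $x(\cdot + h)$, passing to the limit as $h\to 0^+$ with the help of Lemma~\ref{limsup}.

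For part~(i), the theorem recalled at the start of the section gives $f(x(t)) - \dot{x}(t) \in N(C, x(t))$ with $\|f(x(t)) - \dot{x}(t)\| \le \|f(x(t))\|$ for a.e.\ $t$, and symmetrically for $y(\cdot)$. Applying condition~3 of Proposition~\ref{propprox} first to the normal vector $f(x(t)) - \dot{x}(t)$ at $x(t)$ with test point $y(t)\in C$, then to $f(y(t)) - \dot{y}(t)$ at $y(t)$ with test point $x(t)\in C$, and adding the two, the right-hand sides combine into $\tfrac{1}{2r}(\|f(x(t))\|+\|f(y(t))\|)\|x(t)-y(t)\|^2$ while the left-hand side becomes
\[
-\langle \dot x(t) - \dot y(t),\, x(t)-y(t)\rangle + \langle f(x(t)) - f(y(t)),\, x(t)-y(t)\rangle.
\]
Using the $k$-Lipschitz property of $f$ on the second inner product and the identity $\tfrac{1}{2}\tfrac{d}{dt}\|x(t)-y(t)\|^2 = \langle \dot x(t)-\dot y(t),\, x(t)-y(t)\rangle$, I obtain a scalar inequality $w'(t) \le 2b(t)w(t)$ for $w(t):=\|x(t)-y(t)\|^2$ (actually with coefficient $\tfrac{1}{2r}$, so the looser $\tfrac{1}{r}$ stated for $b(t)$ certainly holds). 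Lemma~\ref{gronwall} with $\alpha = 0$ and zero forcing term, followed by a square root, yields \eqref{2sol}.

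For part~(ii), fix $h>0$ and set $y_h(t) := x(t+h)$. Since \eqref{main} is autonomous, $y_h(\cdot)$ is the unique solution with initial condition $x(h)$, so part~(i) applied to $(x, y_h)$ gives
\[
\|x(t+h) - x(t)\| \le \|x(h) - x_0\|\, \exp\Bigl(\int_0^t b_h(s)\,ds\Bigr),
\]
with $b_h(s) = k + \tfrac{1}{r}(\|f(x(s))\| + \|f(x(s+h))\|)$. Dividing by $h$, taking $\limsup$ as $h\to 0^+$, and invoking Lemma~\ref{limsup} to bound $\|x(h)-x_0\|/h$ by $\|v_0\|$ in the limit, I reach \eqref{boundedde} at every point where $\dot x(t)$ exists, which is a.e.

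The delicate point is passing the $\limsup$ inside the exponential, i.e.\ justifying that $\int_0^t b_h(s)\,ds \to \int_0^t (k + 2\|f(x(s))\|/r)\,ds$ as $h\to 0^+$. Since $x$ is locally absolutely continuous and $f$ is Lipschitz, $\|f(x(\cdot))\|$ is continuous and bounded on $[0,t+1]$, so $s\mapsto \|f(x(s+h))\|$ is uniformly bounded and converges uniformly to $\|f(x(s))\|$ as $h\to 0^+$; dominated convergence then delivers the integral limit. This is the only nontrivial step; everything else is a routine assembly of prox-regularity, Cauchy--Schwarz, Lipschitz continuity, and Gronwall.
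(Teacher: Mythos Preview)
Your proposal is correct and follows essentially the same route as the paper: derive the differential inequality for $\|x(t)-y(t)\|^2$ from the prox-regular inequality (Proposition~\ref{propprox}) together with the $k$-Lipschitz bound on $f$, apply Gronwall, then specialize to the time-shifted pair $y(t)=x(t+h)$ and invoke Lemma~\ref{limsup} after taking $\limsup_{h\to 0^+}$. Your write-up is in fact slightly more detailed than the paper's (you spell out how condition~3 is applied at both points and added, note the sharper constant $\tfrac{1}{2r}$, and explicitly justify the passage to the limit inside the exponential via continuity and dominated convergence), but the argument is the same.
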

\begin{proof}
Using the prox-regularity of $C$ and Lipschitz continuity of $f(\cdot)$ similarly as above, we have
\beq
\frac{1}{2}\frac{d}{dt}\|x(t)-y(t)\|^2\le b(t)\|x(t)-y(t)\|^2\;\;a. e. \; t\ge0,
\eeq
where $b(t)=k+\frac{1}{r}\big(\|f\big(x(t)\big)\|+\|f\big(y(t)\big)\|\big).$ Then the Gronwall's inequality (Lemma \ref{gronwall}) implies $(\ref{2sol})$. Given some $h>0$, and we take $y(0)=x(h)$ then $y(t)=x(t+h)$ for all $t\ge 0.$ From $(\ref{2sol})$, we deduce that 
\beq\label{estidiff}
\|\frac{x(t+h)-x(t)}{h}\|\le \|\frac{x(h)-x(0)}{h}\|{\rm exp}\Big(\int_0^t\big(k+\frac{\|f(x(s))\|+\|f(x(s+h))\|}{r}\big)ds\Big) \; {\rm for\;all\;}t\ge 0.
\eeq
Fixed some $t_0\ge 0$ such that $\dot{x}(t_0)$ exists. Taking the limsup of both sides of $(\ref{estidiff})$ as $h\to 0^+$ and using Lemma \ref{limsup}, one gets 
$$\|\dot{x}(t_0)\|\le \|v_0\|{\rm exp}\Big(\int_0^{t_0}\big(k+\frac{2\|f(x(s))\|}{r}\big)ds\Big).$$
Thus $(\ref{boundedde})$ follows.
\end{proof}
Now, we are ready for the main result which  states that the solution is right differentiable and $\dot{x}^+(\cdot)$ is right continuous at each $t\ge 0$. We also recall an important property (Theorem \ref{mainth}-i) acquired in  Proposition 2.6 \cite{hm} by using a different approach. 
\begin{thm}\label{mainth}
Let $x(\cdot)$ be the unique solution of the system satisfying $x(0)=x_0$. Then we have:\\
$(i)$ $\dot{x}(t)=v(t)=\Big(f\big(x(t)\big)-N\big(C,x(t)\big)\Big)^0$ for almost every $t\in [0,+\infty).$\\
$(ii)$ For all $t^*\in [0,+\infty)$, the right derivative $\dot{x}^+(t^*)$ exists and 
$$\dot{x}^+(t^*)=\Big(f\big(x(t^*)\big)-N_C\big(x(t^*)\big)\Big)^0.$$
Furthermore $\dot{x}^+(\cdot)$ is continuous on the right.
\end{thm}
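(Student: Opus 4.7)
The plan is to establish (i) first, then the existence in (ii), and finally the right-continuity of $\dot{x}^+$. For (i), fix $t_0 \ge 0$ at which $\dot{x}(t_0)$ exists and the inclusion $\dot{x}(t_0) \in f(x(t_0)) - N_C(x(t_0))$ holds (a full-measure set). Because the system is autonomous, Lemma \ref{limsup} applied to the shifted trajectory $y(t) := x(t_0 + t)$, which solves the same inclusion with initial datum $x(t_0)$, yields $\|\dot{x}(t_0)\| \le \|v(t_0)\|$. Since the set $f(x(t_0)) - N_C(x(t_0))$ is closed and convex in the Hilbert space $H$ (the prox-regular normal cone coincides with the Clarke cone, hence is a closed convex cone), its minimum-norm element is unique, so $\dot{x}(t_0) = v(t_0)$.

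For the existence of $\dot{x}^+(t^*)$ at an arbitrary $t^* \ge 0$, consider the difference quotients $q_h := (x(t^*+h) - x(t^*))/h$; boundedness as $h \to 0^+$ follows from the shifted Lemma \ref{limsup}. Let $w$ be any weak cluster point along some $h_n \to 0^+$; I will show $w = v(t^*)$. Weak lower semicontinuity gives $\|w\| \le \|v(t^*)\|$. Using (i), I rewrite $q_{h_n} = \frac{1}{h_n}\int_{t^*}^{t^*+h_n} v(s)\,ds$; since $f(x(s)) - v(s) \in N_C(x(s))$ with norm bounded locally by $\|f(x(s))\|$, the prox-regularity inequality $\langle f(x(s)) - v(s),\, c - x(s)\rangle \le \frac{\|f(x(s))\|}{2r}\|c - x(s)\|^2$ averaged over $[t^*,t^*+h_n]$ and passed to the limit (using continuity of $f \circ x$ and $x$) yields $\langle f(x(t^*)) - w,\, c - x(t^*)\rangle \le \frac{\|f(x(t^*))\|}{2r}\|c - x(t^*)\|^2$ for every $c \in C$. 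Hence $f(x(t^*)) - w \in N^P(C, x(t^*)) = N_C(x(t^*))$, so $w \in f(x(t^*)) - N_C(x(t^*))$, and uniqueness of the minimum-norm element forces $w = v(t^*)$. Thus $q_h \rightharpoonup v(t^*)$, and since $\|v(t^*)\| \le \liminf\|q_h\| \le \limsup\|q_h\| \le \|v(t^*)\|$ gives norm convergence, weak plus norm convergence in $H$ upgrades to strong convergence, establishing $\dot{x}^+(t^*) = v(t^*)$.

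For the right-continuity of $\dot{x}^+$, I rerun the argument of Lemma \ref{es2sol} with the pointwise right derivatives now available everywhere from (ii). Setting $y(\tau) = x(\tau + h)$ in the shifted problem at $t^*$ and letting $h \to 0^+$ at a fixed $t > t^*$ gives the \emph{pointwise} bound $\|\dot{x}^+(t)\| \le \|\dot{x}^+(t^*)\|\exp\bigl(\int_{t^*}^t(k + 2\|f(x(s))\|/r)\,ds\bigr)$, which, after $\dot{x}^+ = v$, reads $\|v(t)\| \le \|v(t^*)\|\exp(\cdots)$ for every $t \ge t^*$. Hence $\limsup_{t\to t^{*+}}\|v(t)\| \le \|v(t^*)\|$, and the shifted Lemma \ref{lsc} supplies the reverse $\|v(t^*)\| \le \liminf_{t\to t^{*+}}\|v(t)\|$. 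For any $t_n \to t^{*+}$, the bounded sequence $(v(t_n))$ has weak cluster points; the same test-point argument from Lemma \ref{lsc} (using prox-regularity) shows that each such cluster point lies in $f(x(t^*)) - N_C(x(t^*))$ with norm at most $\|v(t^*)\|$, so it equals $v(t^*)$. Thus $v(t) \rightharpoonup v(t^*)$ and $\|v(t)\| \to \|v(t^*)\|$ as $t \to t^{*+}$, giving strong convergence $\dot{x}^+(t) \to \dot{x}^+(t^*)$ in $H$.

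The main obstacle I anticipate is promoting the \emph{a.e.} estimate of Lemma \ref{es2sol} to a \emph{pointwise} bound at every $t \ge t^*$, which is exactly what lets us control $\|v(t_n)\|$ along sequences $t_n \to t^{*+}$ that may miss the full-measure set where $\dot{x}$ exists; having (ii) in hand for all $t$ (so that $v(\cdot)$ coincides with $\dot{x}^+(\cdot)$ pointwise) is precisely what legalizes this promotion.
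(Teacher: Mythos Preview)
Your proof is correct and follows the paper's overall architecture: part (i) is identical, and the right-continuity argument in (ii) matches the paper's, including the key observation that once $\dot{x}^+(t)=v(t)$ holds \emph{pointwise} one can upgrade the a.e.\ bound of Lemma~\ref{es2sol} (via (\ref{estidiff})) to a bound valid for every $t$. The only substantive difference is in how you establish the \emph{existence} of $\dot{x}^+(t^*)$. The paper first proves that $v(t)\to v(t^*)$ strongly as $t\to t^{*+}$ along the full-measure set $E$ where $\dot{x}$ exists---combining the a.e.\ estimate (\ref{boundedde}) of Lemma~\ref{es2sol} with Lemma~\ref{lsc} to get $\lim_{t\to 0^+,\,t\in E}\|v(t)\|=\|v_0\|$, then running a weak-cluster-point argument on $(v(t_n))$---and only afterwards deduces $\dot{x}^+(0)=v_0$ from the integral representation $q_h=\frac{1}{h}\int_0^h v(s)\,ds$. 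You instead work directly with the difference quotients $q_h$: by averaging the prox-regularity inequality $\langle f(x(s))-v(s),\,c-x(s)\rangle\le \frac{\|f(x(s))\|}{2r}\|c-x(s)\|^2$ over $[t^*,t^*+h_n]$ and passing to the limit you show every weak cluster point of $(q_{h_n})$ already lies in $f(x(t^*))-N_C(x(t^*))$, while Lemma~\ref{limsup} forces its norm to be minimal. Your route is a bit more self-contained for this step (it does not call on Lemma~\ref{es2sol}); the paper's route has the advantage of producing the intermediate convergence $v|_E\to v_0$ first, which it immediately reuses. Both arguments are valid and of comparable length.
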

\begin{proof}
Let $E=\{t\in [0,+\infty): \dot{x}(t) \;{\rm exists}\}$. It is clear that the Lebesgue measure of $[0,+\infty)\setminus E$ is zero. \\
$(i)$ Fixed $t_0\in E$. Let $y(\cdot)$ be the unique solution of the system with initial condition $y(0)=x(t_0).$ Then $y(t)=x(t+t_0)$ for all $t\ge 0.$ Applying Lemma $\ref{limsup}$, we get
\beq
\limsup_{t\to 0^+}\|\frac{y(t)-y(0)}{t}\|\le \|\Big(f\big(y(0)\big)-N\big(C,y(0)\big)\Big)^0\|,
\eeq
or equivalently
\beq
\limsup_{t\to 0^+}\|\frac{x(t+t_0)-x(t_0)}{t}\|\le \|\Big(f\big(x(t_0)\big)-N\big(C,x(t_0)\big)\Big)^0\|.
\eeq
Hence 
\beq
\|\dot{x}(t_0)\|\le\|\Big(f\big(x(t_0))-N(C,x(t_0)\big)\Big)^0\|.
\eeq
On the other hand $\dot{x}(t_0)\in f\big(x(t_0)\big)-N\big(C,x(t_0)\big)$, thus $\dot{x}(t_0)=\Big(f\big(x(t_0)\big)-N\big(C,x(t_0)\big)\Big)^0.$\\
$(ii)$ Due to the property of semi-group, it is sufficient to prove for $t^*=0.$ Using $(i)$ and $(\ref{boundedde})$ of Lemma \ref{es2sol}, for all $t\in E$, we have
\beq
\|v(t)\|\le \|v_0\|{\rm exp}\Big(\int_0^t\big(k+\frac{2\|f(x(s))\|}{r}\big)ds\Big),
\eeq
where $v(t)= \Big(f\big(x(t)\big)-N\big(C,x(t)\big)\Big)^0$. It  implies that
\beq\label{a1}
\limsup_{t\to 0^+, t\in E}\|v(t)\|\le  \|v_0\|.
\eeq
On the other hand, Lemma \ref{lsc} deduces that 
\beq\label{a2}
 \|v_0\| \le \liminf_{t\to 0^+}\|v(t)\|\le \liminf_{t\to 0^+,t\in E}\|v(t)\|.
\eeq
From $(\ref{a1})$ and $(\ref{a2})$, we obtain
\beq
\lim_{t\to 0^+,t\in E}\|v(t)\|= \|v_0\|.
\eeq
Thus for any sequence $(t_n)_{n\ge 1}\subset E$ and $t_n\to 0$, we have
\beq\label{v_n}
\|v(t_n)\|\to \|v_0\| \;{\rm as}\; n\to +\infty.
\eeq
Then $\big(v(t_n)\big)_{n\ge 1}$ is bounded and therefore there exists some $v^*\in H$ such that a subsequence $(v(t_{n_k}))_{k\ge 1}$ converges weakly to $v^*$ when $k\to +\infty.$ Similarly as in Lemma \ref{lsc}, we can prove that $v^*\in f(x_0)-N(C;x_0)$. On the other hand
\beq
\|v^*\|\le \liminf_{k\to+\infty}\|v(t_{n_k})\|= \lim_{k\to+\infty}\|v(t_n)\|=\|v_0\|,
\eeq
due to $(\ref{v_n}).$ Thus, we must have $v^*=v_0$ and the set of weak cluster point of $\big(v(t_n)\big)_{n\ge 1}$ contains only $v_0$. It implies that $v(t_n)$ converges weakly to $v_0$. Combining with $(\ref{v_n})$, one deduces that $v(t_n)$ converges strongly to $v_0$. In conclusion
\beq\label{rcon}
\lim_{t\to 0^+,t\in E}v(t)=v_0.
\eeq
Due to the absolute continuity of $x(\cdot)$ and $(i)$, for all $h>0$, we have
\beq\label{differ}
x(h)-x_0=\int_0^h\dot{x}(s)ds=\int_0^hv(s)ds,
\eeq
where $v(\cdot)$ is locally integrable and satisfying $(\ref{rcon})$. Now we prove that 
\beq\label{aver}
\lim_{h\to 0^+}\frac{1}{h}\int_0^hv(s)ds=v_0.
\eeq
Indeed, given $\epsilon>0$. From $(\ref{rcon})$, there exists $\delta>0$ such that for all $s\in E, s\le \delta$ then $\|v(s)-v_0\|\le \epsilon$. Hence for all $h\le \delta$:
$$\|\frac{1}{h}\int_0^hv(s)ds-v_0\|\le \frac{1}{h}\int_0^h\|v(s)-v_0\|ds=\frac{1}{h}\int_{ [0,h]\cap E}\|v(s)-v_0\|ds\le\frac{\epsilon}{h}\int_{ [0,h]\cap E} ds=\epsilon.$$
So we have $(\ref{aver})$ and thus from $(\ref{differ})$, the right derivative $\dot{x}^+(0)$ exists and
\beq
\dot{x}^+(0)=v_0=\big(f(x_0)-N(C,x_0)\big)^0.
\eeq
It implies for all $t\ge 0$ that
\beq
\dot{x}^+(t)=v(t)=\Big(f\big(x(t)\big)-N\big(C,x(t)\big)\Big)^0.
\eeq
Then taking the limit both sides of $(\ref{estidiff})$, we deduce  for all $t\ge 0$ that
$$\|\dot{x}^+(t)\|\le \|\dot{x}^+(0)\|{\rm exp}\Big(\int_0^t\big(k+\frac{2\|f(x(s))\|}{r}\big)ds\Big),$$
or equivalently
$$\|v(t)\|\le \|v_0\|{\rm exp}\Big(\int_0^t\big(k+\frac{2\|f(x(s))\|}{r}\big)ds\Big).$$
Therefore
$$\limsup_{t\to 0^+}\|v(t)\|\le \|v_0\|.$$
Combining with $(\ref{a2})$, we obtain $\lim_{t\to 0^+}\|v(t)\|= \|v_0\|.$ Similar as $(\ref{rcon})$, we can prove that $\lim_{t\to 0^+}v(t)= v_0.$ It means that $\dot{x}^+(\cdot)$ is right continuous at $0$ and due to the property of semi-group, it is right continuous at any $t\ge 0.$
\end{proof}
Now we consider the case $f(\cdot)=-\nabla V(\cdot)$ where $V$ is $C^{1,+}$ function (i.e., $V$ is differentiable and $\nabla V$ is Lipschitz continuous) and study some asymptotic properties of the solutions. The system then can be considered as an extension of ``gradient equation" \cite{Aubin}.
\begin{prop}
Let  $V: H \to \R$ be a $C^{1,+}$ function. Let $x(\cdot)$ be the solution of the system
\begin{equation}
\left\{
\begin{array}{l}
\dot{x}(t) \in -\nabla V\big(x(t)\big) -N\big(C,x(t)\big)\; {\rm a.e.} \; t \in [0,+\infty),\\ \\
x(0) = x_0\in C.
\end{array}\right.
\end{equation}
Then we have\\
\beq\label{lya}
\frac{d}{dt}V\big(x(t)\big)+\|\dot{x}(t)\|^2=0, \;{\rm for}\; a. e. \;t\ge 0.
\eeq
In particular, $V$ is a Lyapunov function of the system. Furthermore\\
$(i)$ if $V$ is coercive, i.e.,
$$V(x)\to +\infty\;\;{\rm as}\;\; \|x\|\to +\infty,$$
 then $x(\cdot)$ is bounded on $\R_+$.\\
$(ii)$ if $V$ is bounded from below on $C$ then $\lim_{t\to+\infty}V\big(x(t)\big)=V_\infty$ exists and $\dot{x}\in  L^{2}([0,+\infty);H)$ with $\int_0^{+\infty}\|\dot{x}(s)\|^2ds=V(x_0)-V_\infty.$ \\
$(iii)$ If $V$ is convex and bounded from below on $C$ then $V_\infty=\displaystyle \inf_{y\in C} V(y).$
\end{prop}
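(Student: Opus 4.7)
The plan is to derive the energy identity \eqref{lya} first and then read off items (i)--(iii) as consequences. Starting from Theorem \ref{mainth}(i), for almost every $t \ge 0$ the derivative $\dot x(t)$ is the element of least norm in the convex set $-\nabla V(x(t)) - N(C, x(t))$ (convex because the normal cone to a prox-regular set is a convex cone). Writing $\dot x(t) = -\nabla V(x(t)) - \xi(t)$ with $\xi(t) \in N(C, x(t))$, the minimum-norm characterization tested against $\lambda \xi(t) \in N(C, x(t))$ for $\lambda \ge 0$ forces the orthogonality $\langle \dot x(t), \xi(t) \rangle = 0$. Since $V$ is $C^{1,+}$ and $x(\cdot)$ is locally absolutely continuous, the classical chain rule yields
$$\frac{d}{dt} V(x(t)) = \langle \nabla V(x(t)), \dot x(t) \rangle = \langle -\dot x(t) - \xi(t), \dot x(t) \rangle = -\|\dot x(t)\|^2,$$
which is precisely \eqref{lya}; in particular $V \circ x$ is non-increasing, so $V$ is a Lyapunov function.

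Integrating gives $V(x(t)) + \int_0^t \|\dot x(s)\|^2 ds = V(x_0)$. For (i), coercivity together with $V(x(t)) \le V(x_0)$ confines $x(t)$ to a bounded sublevel set of $V$. For (ii), $V(x(\cdot))$ is monotone non-increasing and bounded below on $C$, hence admits a limit $V_\infty$ at infinity; rearranging the integrated identity to $\int_0^t \|\dot x(s)\|^2 ds = V(x_0) - V(x(t)) \to V(x_0) - V_\infty$ as $t \to \infty$ delivers both $\dot x \in L^2([0, +\infty); H)$ and the claimed norm equality.

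For (iii) I would combine convexity of $V$ with the prox-regularity estimate. For any $y \in C$, convexity gives $V(y) \ge V(x(t)) + \langle \nabla V(x(t)), y - x(t) \rangle$; substituting $\nabla V(x(t)) = -\dot x(t) - \xi(t)$, applying Proposition \ref{propprox}(3) to $\langle \xi(t), y - x(t) \rangle$, and using $\langle \dot x(t), x(t) - y \rangle = \tfrac{1}{2} \tfrac{d}{dt} \|x(t) - y\|^2$, I arrive at
$$V(x(t)) - V(y) \le -\frac{1}{2}\frac{d}{dt}\|x(t) - y\|^2 + \frac{\|\xi(t)\|}{2r}\|x(t) - y\|^2.$$
In the convex case $r = +\infty$ the correction term vanishes, and integrating over $[0, T]$, using $V(x(t)) \ge V_\infty$, and letting $T \to \infty$ directly yields $V_\infty \le V(y)$ for every $y \in C$, hence $V_\infty = \inf_C V$. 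The main obstacle is controlling the prox-regular correction $\frac{\|\xi(t)\|}{2r}\|x(t) - y\|^2$ for finite $r$: I would use the pointwise bound $\|\xi(t)\| \le \|\nabla V(x(t))\|$ coming from the existence theorem, together with boundedness of $x(\cdot)$ (which, under (i), is automatic), to show this term accumulates only sublinearly in $T$, so that the same limiting argument still gives $V_\infty \le V(y)$ for every $y \in C$ and hence the stated equality (the reverse inequality being trivial).
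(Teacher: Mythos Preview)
Your derivation of the energy identity \eqref{lya} and of items (i)--(ii) is correct and essentially coincides with the paper's: both use that $\dot x(t)$ is the minimal-norm element of $-\nabla V(x(t))-N(C,x(t))$, which (via the cone--projection orthogonality) gives $\langle \dot x(t),\xi(t)\rangle=0$ and hence the chain-rule identity.

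For (iii), however, your argument has a real gap. The hypothesis of (iii) is only that $V$ is convex and bounded below on $C$; coercivity is \emph{not} assumed, so you cannot invoke (i) to get boundedness of $x(\cdot)$ (and indeed it can fail: take $C=H=\mathbb R$, $V(x)=e^{-x}$). Worse, even if $x(\cdot)$ were bounded, the bounds $\|\xi(t)\|\le\|\nabla V(x(t))\|\le M$ and $\|x(t)-y\|\le R$ only yield
\[
\int_0^T \frac{\|\xi(t)\|}{2r}\|x(t)-y\|^2\,dt \;\le\; \frac{MR^2}{2r}\,T,
\]
which is linear, not sublinear, in $T$; so your limiting argument does not deliver $V_\infty\le V(y)$.

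The paper circumvents this by a Gronwall trick rather than a direct integral comparison. With $\varphi(t)=\tfrac12\|x(t)-y\|^2$, the same ingredients give
\[
\dot\varphi(t)\le \frac{2\|\nabla V(x(t))\|}{r}\,\varphi(t)+V(y)-V(x(t)),
\]
and Gronwall (together with the monotonicity $V(x(s))\ge V(x(t))$ for $s\le t$) yields
\[
0\le\varphi(t)\le \exp\!\Big(\int_0^t\frac{2\|\nabla V(x(\tau))\|}{r}\,d\tau\Big)\Big[\varphi(0)+t\big(V(y)-V(x(t))\big)\Big].
\]
The point is that the exponential factor, whatever its size, is strictly positive, so nonnegativity of $\varphi(t)$ forces the bracket to be nonnegative, i.e.\ $V(x(t))\le V(y)+\varphi(0)/t$; letting $t\to\infty$ gives $V_\infty\le V(y)$. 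No boundedness of $x(\cdot)$ and no integrability of $\|\nabla V(x(\cdot))\|$ are needed.
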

\begin{proof}
 Fixed some $t\ge 0$ such that $(i)$ of Theorem \ref{mainth} holds, i.e., $\dot{x}(t)=\Big(-\nabla V\big(x(t)\big)-N_C\big(x(t)\big)\Big)^0=-\nabla V\big(x(t)\big)-{\rm proj}\Big(N\big(C,x(t)\big); -\nabla V\big(x(t)\big)\Big)$. Then
\baqn
&&\big\langle \dot{x}(t)+ \nabla V\big(x(t)\big),  \dot{x}(t) \big\rangle\\
&=&\big\langle -{\rm proj}\Big(N\big(C,x(t)\big); -\nabla V\big(x(t)\big)\Big), \nabla V(x(t))-{\rm proj}\Big(N\big(C,x(t)\big); -\nabla V\big(x(t)\big)\Big)\big\rangle\\
&=&0.
\eaqn
Note that $\frac{d}{dt}V\big(x(t)\big)=\big\langle\nabla V\big(x(t)\big),  \dot{x}(t) \big\rangle$ and $(\ref{lya})$ follows. In particular, we have $\frac{d}{dt}V\big(x(t)\big)\le 0$ for a.e. $t\ge 0.$ It means that $V$ is a Lyapunov function of the system.  Then $(i)$ and $(ii)$ follow classically.

%$(i)$ One has $V\big(x(t)\big)\le V(x_0),$ for all $t\ge 0$. Then the coerciveness of $V$ implies that $x(\cdot)$ is bounded on $\R_+.$
%
%$(ii)$ Since    $V\circ x(\cdot)$ is bounded from below and  non-increasing on $\R_+$, the limit $\lim_{t\to+\infty}V\big(x(t)\big)=V_\infty$ exists. Integrating both sides of $(\ref{lya})$ from $0$ to $t$, one has
%\beq
%\int_0^t\|\dot{x}(s)\|^2ds=V(x_0)-V\big(x(t)\big).
%\eeq
% Let $t\to +\infty$, we obtain $\int_0^{+\infty}\|\dot{x}(s)\|^2ds=V(x_0)-V_\infty.$\\
\noindent $(iii)$ Fix some $y\in C$ and consider the function  $\varphi(t)=\frac{1}{2}\|x(t)-y\|^2.$ Due to the $r$-prox-regularity of $C$ and the fact that $\dot{x}(t) +\nabla V\big(x(t)\big)\in -N\big(C,x(t)\big)\; {\rm a.e.} \; t \in [0,+\infty)$, one has 
$$\langle \dot{x}(t) +\nabla V\big(x(t)\big), x(t)-y \rangle\le \frac{\|\nabla V\big(x(t)\big)\|}{r}\|x(t)-y\|^2.$$ Thus
\baqn
\dot{\varphi}(t)=\langle \dot{x}(t), x(t)-y \rangle &\le& \frac{2\|\nabla V\big(x(t)\big)\|}{r} \varphi(t)+\langle\nabla V\big(x(t)\big), y-x(t) \rangle\\
&\le&  \frac{2\|\nabla V\big(x(t)\big)\|}{r} \varphi(t)+V(y)-V\big(x(t)\big),
\eaqn
due to the convexity of $V$. Using Gronwall's inequality (Lemma \ref{gronwall}), for all $t\ge 0$ one obtains 
\baqn
0\le\varphi(t)&\le& \varphi(0){\rm exp}\Big(\int_0^t \frac{2\|\nabla V(x(\tau))\|}{r}d\tau\Big)+\int_0^t {\rm exp}\Big(\int_s^t \frac{2\|\nabla V\big(x(\tau)\big)\|}{r}d\tau\Big)[V(y)-V\big(x(s)\big)]ds\\
&\le&{\rm exp}\Big(\int_0^t \frac{2\|\nabla V(x(\tau))\|}{r}d\tau\Big) \big[\varphi(0)+t\Big(V(y)-V\big(x(t)\big)\Big)\big],
\eaqn
since $ V\big(x(s)\big)\ge V\big(x(t))\big)$ for all $s\in [0,t]$.
It implies that 
$$V\big(x(t)\big)\le V(y)+\frac{\varphi(0)}{t}.$$
Let $t\to+\infty$, one gets $V_\infty\le V(y).$ Since $y$ is arbitrary in $C$, it deduces that $V_\infty\le\displaystyle \inf_{y\in C}V(y).$ On the other hand $V\big(x(t)\big)\ge\displaystyle\inf_{y\in C}V(y)$ since $x(t)\in C$ for all $t\ge 0.$ Hence $V_\infty\ge \displaystyle\inf_{y\in C}V(y)$. Therefore $V_\infty= \displaystyle\inf_{y\in C}V(y)$, it means the trajectory is minimizing for $V$ on $C$.\\

%$(iii)$ Denote $\mu$ the Lebesque measure on $\R$. Since $\dot{x}\in L^2([0,+\infty);H)$, it is easy to see that $\mu(\{t\ge 0: \|\dot{x}(t)\|<\varepsilon\})=+\infty$ for any given $\varepsilon>0.$ Indeed, if $\mu(\{t\ge 0: \|\dot{x}(t)\|<\varepsilon\})<+\infty$ then $\mu(X)=+\infty$ where $X=\{t\ge 0: \|\dot{x}(t)\|\ge\varepsilon\}$ and thus 
%$$\int_0^\infty \|\dot{x}(t)\|^2dt\ge \int_X \|\dot{x}(t)\|^2dt\ge \varepsilon^2\mu(X)=+\infty,$$
%a contradiction. Since $x_\infty$ is a strong limit point, there exists a sequence $(s_n)_{n\ge 1}\subset \R_+$ such that $s_n\to +\infty$ and $x(s_n)\to x_\infty.$ Applying Proposition 4.2 \cite{MT}, one has a strictly increasing mapping $\nu: \N\mapsto \N$ such that $\forall \delta >0, \exists m(\delta)\in \N, \forall n\ge m(\delta), \exists t_{n,\delta}\ge 0$ satisfying
%\begin{itemize}
%\item $s_{\nu(n)}-\delta< t_{n,\delta}< s_{\nu(n)}+\delta,$
%\item $\dot{x}(t_{n,\delta})$ exists,  $\dot{x}(t_{n,\delta})\in -\nabla V(x(t_{n,\delta}))-N_{C}(x(t_{n,\delta}))$ and 
%\item $\|\dot{x}(t_{n,\delta})\|\le \delta.$
%\end{itemize}
%Let $\delta\to 0, n\to +\infty$ then $\dot{x}(t_{n,\delta})\to 0$ and $x(t_{n,\delta})\to x_\infty$. Proving Similarly as in Lemma \ref{lsc}, one obtains $0\in -\nabla V(x_\infty) -N_{C}(x_\infty).$
\end{proof}

\section{Conclusion}\label{section4}
In this paper, we have established some important regularity properties for a class of differential inclusions involving normal cone operator of  prox-regular sets without tangential assumption. Some asymptotic behaviours of the solutions are also studied. It is interesting to consider properties of solutions of sweeping process with   prox-regular sets, where $C$ can depend on time and even the state. It is out of scope of the current work and will be considered in the future.\;
\vspace{5mm}

\noindent $\mathbf{Acknowledgments}$ \;\;The author would like to acknowledge the referees for their careful reading and insightful suggestions. The research is supported by Fondecyt Project 3150332.

\end{document}